\documentclass[11pt,reqno]{amsart}
\usepackage{amssymb}
\usepackage{upgreek}
\usepackage{dsfont }
\usepackage{mathrsfs}
\usepackage{mathtools}
\usepackage[all]{xy}
\usepackage{color}
\usepackage{verbatim}
\allowdisplaybreaks
\setlength{\textheight}{50pc} \setlength{\textwidth}{33pc}

\newtheorem{thm}{Theorem}[section]
\newtheorem{lem}[thm]{Lemma}

\newtheorem{cor}[thm]{Corollary}

\theoremstyle{definition}
\newtheorem{dfn}[thm]{Definition}

\theoremstyle{remark}
\newtheorem{remark}[thm]{Remark}


\newcommand{\af}{\alpha}
\newcommand{\bt}{\beta}

\newcommand{\dt}{\delta}
\newcommand{\ep}{\varepsilon}

\newcommand{\ld}{\lambda}

\newcommand{\ph}{\varphi}

\newcommand{\R}{{\mathbb{R}}}

\newcommand{\Hess}{\mathrm{Hess}}
\newcommand{\nab}{\nabla}
\newcommand{\pr}{\partial}

\begin{document}


\title[Matrix inequality for the Laplace equation]
{Matrix inequality for the Laplace equation}

\author[Jiewon Park]{Jiewon Park}
\address{
Department of Mathematics\\
Massachusetts Institute of Technology\\
182 Memorial Drive, Cambridge, MA 02139\\
United States} \email{jiewon\-@\-mit.\-edu}

\keywords{Green function for the Laplace equation, matrix Harnack inequality}

\subjclass[2010]{53C43, 58E20, 58J05} 

\begin{abstract}   
Since Li and Yau obtained the gradient estimate for the heat equation, related estimates have been extensively studied. With additional curvature assumptions, matrix estimates that generalize such estimates have been discovered for various time-dependent settings, including the heat equation on a K\"{a}hler manifold, Ricci flow, K\"{a}hler-Ricci flow, and mean curvature flow, to name a few. As an elliptic analogue, Colding proved a sharp gradient estimate for the Green function on a manifold with nonnegative Ricci curvature. In this paper we prove a related matrix inequality on manifolds with suitable curvature and volume growth assumptions. 
\end{abstract} 

\maketitle

\section{Introduction}

In the seminal paper \cite{LY}, Li and Yau proved a sharp estimate for the gradient of the heat kernel on a complete Riemannian manifold with Ricci curvature bounded below. It leads to a Harnack inequality on such manifolds by integration along shortest geodesics, which is sometimes referred to as a differential Harnack inequality. Later Hamilton \cite{H2} discovered a time-dependent matrix quantity that stays positive-semidefinite at all time, in the case that the manifold has nonnegative sectional curvature and parallel Ricci curvature. Taking the trace of this matrix inequality yields the Li-Yau gradient estimate. 

Matrix estimates have also been developed for other settings, such as the heat equation on K{\"a}hler manifolds with nonnegative holomorphic bisectional curvature by L. Ni and H. D. Cao \cite{CN}, Ricci flow by Hamilton \cite{H1}, K{\"a}hler-Ricci flow by L. Ni \cite{N1}, and mean curvature flow by Hamilton \cite{H3}, to name a few. There are close connections between such Harnack estimates and entropy formulae, as illustrated in the excellent survey by L. Ni \cite{N2}.

As an elliptic setting parallel to the aforementioned time-dependent results, Colding \cite{C} obtained a sharp gradient estimate for the minimal positive Green function for the Laplace equation under the relatively mild assumption of nonnegative Ricci curvature. This estimate is closely related to monotonicity formulae for manifolds with Ricci curvature bounded below; for details we refer to \cite{C}, \cite{CM}, and \cite{CM2}. Such monotone quantities turn out to be extremely useful, as they control the distance to the nearest cone of the manifold and are used  to prove the uniqueness of the tangent cone for Einstein manifolds \cite{CM3}. In this paper, we show that there exists a related matrix inequality.

\vskip 1pc

\begin{thm}\label{the thm}
Let $(M^n,g)$ be a complete non-compact Riemannian manifold of  Euclidean volume growth and dimension $n\geq 3$. Let $G$ be the minimal positive Green function with pole at $x\in M$ and $b=G^\frac{1}{2-n}$. Suppose that $M$ has nonnegative sectional curvature along $\nab G$ \footnote{By this we mean that $R(\nab G,V,\nab G, V)\geq 0$ for any vector field $V$ on $M$.} and parallel Ricci curvature. If $\Hess_{b^2}$ is uniformly bounded from above on $M\backslash\{x\}$ \footnote{By this we mean that $\Hess_{b^2} \leq Dg$ everywhere on $M\backslash\{x\}$ for some $D>0$.} and asymptotically bounded from above by $Cg$ at $x$,\footnote{By this we mean that 
\begin{align*}
\underset{\ep\rightarrow 0}{\liminf}\sup_{\substack{(p,V)\\r(p)=\ep,\hspace{0.6mm} V\in T_p M,\hspace{0.6mm} g(V,V)=1}}\big(\Hess_{b^2}-Cg\big)(V,V)\leq 0.
\end{align*} } where $C\geq 10$, then $\Hess_{b^2} \leq Cg$ holds everywhere on $M\backslash\{x\}$.
\end{thm}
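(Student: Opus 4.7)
The plan is to prove $A := \Hess_{b^2} \leq Cg$ by a tensor maximum principle in the spirit of Hamilton's matrix Harnack estimates. Set
\begin{align*}
\lambda(p) := \sup\{ A(V,V) - C \;:\; V \in T_p M,\ |V|_g = 1\}.
\end{align*}
The uniform bound $A \leq Dg$ gives $\lambda \leq D - C$ on $M \setminus \{x\}$, and the asymptotic hypothesis at $x$ supplies a sequence of spheres $\{r = \ep_k\}$ with $\ep_k \to 0$ and $\sup_{r = \ep_k} \lambda \to 0$. For the behavior at infinity, one uses Euclidean volume growth together with Colding's sharp gradient estimate \cite{C} to argue that $b$ is asymptotic to the radial coordinate on the tangent cone at infinity; a computation on this cone yields $A \to 2g$, so $\lambda \to 2 - C < 0$ in view of $C \geq 10$. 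Consequently, if $\sup\lambda > 0$ anywhere, then this supremum is attained at some interior point $p_0 \in M \setminus \{x\}$ (by applying the weak maximum principle to $\lambda$ on an exhausting sequence of annuli).

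Next I would derive a tensor PDE for $A$. Starting from $\Delta G = 0$ with $G = b^{2-n}$, one obtains $\Delta b = (n-1)|\nab b|^2/b$ and $\Delta b^2 = 2n|\nab b|^2$. Commuting $\Delta$ with $\nab^2$ using $\nab \mathrm{Ric} = 0$ and applying the Bochner formula to $b$, one is led to a schematic equation
\begin{align*}
\mathcal{L} A_{ij} \;=\; 2 R_{ikjl}\, A^{kl} \;-\; R_{ik} A^k{}_j \;-\; R_{jk} A^k{}_i \;+\; \mathcal{N}(A, \nab A),
\end{align*}
where $\mathcal{L} = \Delta + X \cdot \nab$ is a drift Laplacian (with $X$ built from $\nab b$) and $\mathcal{N}$ is a reaction term quadratic in $A$ and at most first order in $\nab A$. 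Let $V$ be a unit eigenvector of $A(p_0)$ realizing $\lambda(p_0)$, extended in a neighborhood by parallel transport. At $p_0$ the first- and second-order conditions give $\mathcal{L}(A(V,V))|_{p_0} \leq 0$, forcing the reaction terms evaluated at $(V,V)$ to be nonpositive.

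Decompose $V = \alpha N + W$ with $N := \nab b/|\nab b|$ and $W \perp N$. The sectional curvature hypothesis $R(\nab G, \cdot, \nab G, \cdot) \geq 0$ controls the fully-$N$ part of $R_{ikjl}A^{kl}$ with a favorable sign. The mixed and transverse parts are not directly signed, and must be rewritten via the trace identity $\mathrm{tr}(A) = 2n|\nab b|^2$ and the harmonicity of $G$, which together allow transverse curvature contractions to be replaced by quadratic expressions in the entries of $A$ itself. The parallel Ricci assumption kills derivatives of $\mathrm{Ric}$ appearing in $\mathcal{N}$ and reduces $R_{ik}A^k{}_j$ to a constant-coefficient multiple of $A$. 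After this bookkeeping, the reaction term evaluated at $(V,V)$ becomes a polynomial in $A(V,V)$ that is strictly positive once $A(V,V) > C$, provided $C$ is at least an explicit absolute constant; the value $10$ should emerge precisely from this algebra, contradicting the maximum principle conclusion.

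The chief obstacle is this final step. Because the curvature is only assumed nonnegative along $\nab G$, the transverse part of $R_{ikjl} A^{kl}$ is not directly signed, and controlling it requires careful algebraic manipulations based solely on harmonicity, the trace identity, and parallel Ricci. Making the resulting polynomial in $A(V,V)$ explicit enough to pin down the threshold constant $10$ is the technical heart of the argument; a secondary subtlety is ensuring that the asymptotic behavior on the tangent cone at infinity actually gives $A \to 2g$ in a sense strong enough to rule out a supremum escaping to infinity.
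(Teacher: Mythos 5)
Your overall strategy (reduce to a tensor maximum principle for a quantity related to $\Hess_{b^2}$, locate an extremal point, compute the Laplacian there, and show the curvature and nonlinear terms are signed) is the right one, and several of your observations are on target: the paper does use parallel Ricci to kill derivative-of-Ricci terms, does exploit the sectional curvature hypothesis to sign the term $R_{ikjl}G_kG_lV^iV^j$, and does extend the minimizing eigenvector with constant coefficients in a normal frame (equivalent to parallel transport to the relevant order) to build a $C^2$ barrier in the sense of Calabi. However, there are three concrete gaps that your outline does not fill, and each corresponds to a step the paper handles differently and more simply.

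First, your treatment of the behavior at infinity via the tangent cone and an asymptotic claim $\Hess_{b^2}\to 2g$ is both unproved and unnecessary. The paper never invokes tangent cones here. Instead it conjugates to the tensor $\widetilde{H}=\Hess_G+\frac{n}{2-n}\frac{\nab G\otimes\nab G}{G}+\frac{n-2}{2}C\,G^{\frac{n}{n-2}}g$, for which $\Hess_{b^2}\leq Cg$ is equivalent to $\widetilde H\geq 0$; then the uniform bound $\Hess_{b^2}\leq Dg$ becomes $\Lambda\geq \frac{n-2}{2}(C-D)G^{\frac{n}{n-2}}$, and since $G^{\frac{n}{n-2}}=O(r^{-n})\to 0$ at infinity, the boundary contribution at $r=R$ tends to $0$ as $R\to\infty$ with no asymptotic geometry needed. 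Your cone argument would require extra work (uniformity of the cone convergence, regularity of $b$ on the cone) and is not the content of the hypothesis.

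Second, and most importantly, your ``reaction term'' analysis is where the constant $10$ is supposed to appear, but you neither perform the computation nor identify the input that makes it close. The decisive ingredient in the paper's estimate is Colding's sharp gradient bound $|\nab b|\leq 1$, equivalently $|\nab G|^2\leq (n-2)^2 G^{1+\frac{n}{n-2}}$, used \emph{pointwise} to dominate the terms $B_{ij}=G_iG_j/G$ and $|\nab G|^2/G^2$ appearing in $\Delta\widetilde H_{ij}$; this is what turns the loose quadratic expressions into the explicit bound $-\frac{n(n-2)}{2}C(C-10)G^{2\alpha-1}\leq 0$ for $C\geq 10$. In your writeup, Colding's estimate is only mentioned for the asymptotics and plays no role in bounding the nonlinear terms, so the threshold $10$ cannot ``emerge from the algebra'' as stated. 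Relatedly, your decomposition of $V$ into components along $N=\nab b/|\nab b|$ and transverse $W$, and the claimed rewriting of transverse curvature contractions via $\mathrm{tr}(A)=2n|\nab b|^2$, do not appear in the paper and are not obviously closable; the paper instead diagonalizes $\widetilde H$ at the extremal point, writes $V=e_m$, and obtains the manifestly nonpositive expression $2R_{mkmk}(\Lambda-\lambda_k)$ for the linear curvature terms.

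Third, your choice to run the argument directly on $A=\Hess_{b^2}$ with a drift Laplacian is genuinely different from the paper, which works with $\widetilde H$ (built from $\Hess_G$, not $\Hess_{b^2}$) precisely so that $\Delta G=0$ can be used and the operator is the plain Laplacian with no drift. This is not just cosmetic: the commutator identities for $\Delta\nab^2 f$ and the Bochner bookkeeping simplify dramatically when $f=G$ is harmonic, and the quantity $\Delta G^{\alpha}=\frac{2n}{(n-2)^2}G^{\alpha-2}|\nab G|^2$ supplies an extra favorable term that is again controlled by the gradient estimate. A drift Laplacian formulation for $\Hess_{b^2}$ might also work, but you would have to verify that the extra first-order term preserves the maximum-principle sign, which you have not done. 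In summary, your proposal has the right skeleton but the two load-bearing steps (the pointwise use of $|\nab b|\leq 1$ in the reaction estimate, and the simplification obtained by using $\Hess_G$ rather than $\Hess_{b^2}$) are missing, and the asymptotic-infinity argument should be replaced by the decay of $G^{n/(n-2)}$ together with the assumed uniform bound.
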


\vskip 1pc

To motivate the above theorem, suppose for a moment that $M=\R^n$. The minimal positive Green function $G$ with a pole at the origin is given by $C(n)\cdot r^{2-n}$, where $r$ is the distance from the origin and $C(n)$ is a dimensional constant. We observe that the first and the second order derivatives of $G$ satisfies the following relation, which motivates a bound on the Hessian of $G$.
\begin{align*} 
G_{ij}+\frac{n}{2-n}\cdot\frac{G_iG_j}{G}=(2-n)G^\frac{-n}{2-n}\dt_{ij}.	
\end{align*}

Another motivation, which we describe here, comes from the Hessian comparison theorem for radial functions. Let $(M^n,g)$ be a complete non-compact Riemannian manifold. Fixing a point $x$, $M$ is called \textit{parabolic} if it does not admit a positive Green function for the Laplacian with pole at $x$. It is called \textit{non-parabolic} otherwise. If $M$ has nonnegative Ricci curvature, a result of Varopoulos \cite{V} states that $M$ is non-parabolic if and only if $\int_s^\infty \frac{t}{\mathrm{Vol}B_x(t)} dt<\infty$ for any positive $s$. $M$ is said to have \textit{Euclidean volume growth} if fore some $c>0$ it holds that $\mathrm{Vol}(B_x(t))\geq c\cdot t^n$ for any $t>0$. The result of Varopoulos implies that a manifold of dimension $\geq 3$ and Euclidean volume growth is non-parabolic. On such $M$, combining the results of Li and Tam \cite{LT} and Gilbarg and Serrin \cite{GS}, we see that there exists a unique minimal positive symmetric Green function $G=G(x,y)$ such that $G(x,y)=G_x(y)=O(r^{2-n})$, where $r$ is the distance from $x$. Hence, a bound on the Hessian of $G$ would be a natural analogue for the Hessian comparison theorem. From now on, we normalize $G$ suitably so that $G=r^{2-n}$ on the Euclidean space $M=\R^n$. Then we can define a function $b$ as follows.
\begin{equation*}
	b:=G^\frac{1}{2-n}.
\end{equation*} 

\noindent Then $b$ corresponds to just $r$, and therefore might be more intuitive than $G$. On the Euclidean space $\R^n$, the Hessian of $b^2=r^2$ satisfies the following equation.
\begin{equation*}
\Hess_{b^2}=2g.	
\end{equation*}

The Hessian comparison theorem would suggest a result in the direction of $\Hess_{b^2}\leq Cg$ with $2\leq C$, so we ask under which conditions on $M$ such a bound could be obtained. It turns out that if $M$ has nonnegative sectional curvature along $\nab G$ and parallel Ricci curvature, and if $\Hess_{b^2}$ is bounded locally near $x$ and also arbitrarily far out, then the bound extends globally to the region in between. The precise meaning of the last two conditions is the following. Suppose that $\Hess_{b^2} \leq Cg$ in the neighborhood of $x$ with the constant $C$ as in the theorem below. Then only one of the two cases can happen: either $\Hess_{b^2} \leq Cg$ on all of $M\backslash \{x\}$, or $\Hess_{b^2}$ diverges as $r \rightarrow \infty$. Such is the content of Theorem \ref{the thm}.

\vskip 1pc 
\begin{remark}
The two curvature assumptions in Theorem \ref{the thm} are critical in the proof, and were also taken in \cite{H2}. Note, however, that all of the arguments in the proof of Theorem \ref{the thm} can readily be generalized to the case where sectional curvature is bounded from below by $-K \cdot G^{-\frac{2}{2-n}}$ and the first derivative of Ricci curvature is bounded as $|\nab_i R_{jk}| \leq L\cdot G^{-\frac{3}{2-n}}$. Then we obtain an upper bound of $\Hess_{b^2}$ in terms of $n,K,L$. It would be an interesting question to ask whether a similar inequality holds under scale-invariant curvature assumptions with $r$ instead of $G$, i.e. under the assumptions that the sectional curvature is bounded from below by $-K \cdot r^{-2}$ and the first derivative of Ricci curvature is bounded as $|\nab_i R_{jk}| \leq L\cdot r^{-3}$.
\end{remark}

\vskip 1pc

As a corollary we obtain a Harnack inequality for $b$. Let $y,z\in M\backslash\{x\}$ and consider a minimal geodesic segment $\overline{yz}$ parametrized by arclength $s$. Then under the assumptions of Theorem \ref{the thm}, the function $\frac{C}{2}s^2-b^2$ is convex. Hence we obtain the following corollary.

\vskip 1pc

\begin{cor}
Under the assumptions of Theorem \ref{the thm}, let $w$ be the point on a minimal geodesic $\overline{yz}$ such that $d(y,w)=\ld\cdot d(y,z)$ and $d(w,z)=(1-\ld) \cdot d(y,z)$, $0\leq\ld\leq 1$. Then 
\begin{align*}
b(w)^2 \geq (1-\ld)\cdot b(y)^2+\ld\cdot b(z)^2-\frac{C}{2}\ld(1-\ld)\cdot d(y,z)^2.
\end{align*} 
\end{cor}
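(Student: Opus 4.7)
The plan is to reduce the inequality to the elementary fact that a twice-differentiable function with nonnegative second derivative lies below its secant line. Let $L=d(y,z)$ and let $\gm\colon [0,L]\to M$ denote the minimal geodesic from $y$ to $z$ parametrized by arclength, so that $\gm(0)=y$, $\gm(L)=z$, and $w=\gm(\ld L)$. Define
\begin{equation*}
f(s):=b^2\bigl(\gm(s)\bigr),\qquad h(s):=\frac{C}{2}s^2-f(s).
\end{equation*}

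The main computation is to evaluate $f''(s)$. Since $\gm$ is a geodesic, $\nab_{\gm'}\gm'=0$, and so the chain rule for functions on a Riemannian manifold gives $f''(s)=\Hess_{b^2}\bigl(\gm'(s),\gm'(s)\bigr)$; because $\gm$ is unit-speed, $g(\gm',\gm')=1$. Theorem \ref{the thm} then applies at each interior point of $\gm$ (which lies in $M\setminus\{x\}$, since $\gm$ is a geodesic between two such points and $b$ could only degenerate at $x$) to yield $f''(s)\leq C$, hence $h''(s)\geq 0$. Thus $h$ is convex on $[0,L]$.

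Convexity on an interval gives the secant-line bound $h(\ld L)\leq (1-\ld)h(0)+\ld h(L)$, which expands to
\begin{equation*}
\frac{C}{2}\ld^2L^2-b^2(w)\leq -(1-\ld)b^2(y)+\ld\!\left(\frac{C}{2}L^2-b^2(z)\right).
\end{equation*}
Rearranging and using $\ld-\ld^2=\ld(1-\ld)$ yields the claimed inequality, completing the proof.

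I do not foresee any obstacle: once Theorem \ref{the thm} is in hand, the corollary is a one-variable convexity argument along the geodesic, and the only point worth checking is the mild regularity issue that the geodesic $\overline{yz}$ avoids the pole $x$, which is automatic because both endpoints lie in $M\setminus\{x\}$ (if one wished to be cautious, one could restrict to the open segment and take limits at the endpoints, since $b^2$ extends continuously there).
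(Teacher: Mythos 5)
Your argument is the same one the paper has in mind: parametrize the minimal geodesic by arclength, observe that $h(s)=\frac{C}{2}s^2-b^2(\gm(s))$ satisfies $h''(s)=C-\Hess_{b^2}(\gm',\gm')\geq 0$ by Theorem \ref{the thm}, and then the secant inequality for the convex function $h$ rearranges to the stated bound; the computation of $f''$ via $\nab_{\gm'}\gm'=0$ and the algebra at the end are both correct.

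However, the parenthetical regularity remark is mistaken. You claim the geodesic $\overline{yz}$ avoids the pole $x$ ``automatically because both endpoints lie in $M\setminus\{x\}$,'' but that is false in general: a minimal geodesic joining two points of $M\setminus\{x\}$ can pass through $x$. Your fallback (restricting to the open segment and taking limits at the endpoints) addresses the endpoints, where $b^2$ is in fact smooth since $y,z\neq x$, and not the possible interior singularity. The paper's one-line justification glosses over this too, but it is worth closing. The repair: if $\gm(s_0)=x$ for some $s_0\in(0,L)$, then $b$ extends continuously with $b(x)=0$, and Colding's gradient estimate $|\nab b|\leq 1$ gives $|(b^2)'(s)|=|2\,b\,g(\nab b,\gm')|\leq 2b(\gm(s))\to 0$ as $s\to s_0^{\pm}$, so $h'$ is continuous at $s_0$ with common value $Cs_0$. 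Since $h'$ is nondecreasing on each of $(0,s_0)$ and $(s_0,L)$ and continuous at the junction, it is nondecreasing on all of $(0,L)$, so $h$ is convex across $s_0$ as well, and the secant inequality still holds.
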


\vskip 1pc

Note that the corollary holds whether $b(y)\neq b(z)$ or not. This compares with the fact that integrating an estimate on the scalar quantity $|\nab b|$ can only compare values of $b$ between two points on different level sets of $b$. Also, note that in the above corollary, the Euclidean space $M=\R^n$ achieves the equality with $C=2$.

\vskip 1pc

\noindent \textbf{Acknowledgements.} The author would like to thank Prof. Tobias Colding for initially bringing the problem to the author's attention, and for numerous valuable discussions and suggestions.

\vskip 1pc

\section{Proof of the matrix inequality}

\noindent \textbf{Notation.} For the convention of the curvature tensor, we use 
\begin{equation*}
	R(X,Y,Z,W)=g(\nab_Y \nab_X Z-\nab_X \nab_Y Z+\nab_{[X,Y]}Z,W).
\end{equation*}

In an orthonormal frame $\{e_i\}$ we write in coordinates that $R(e_i,e_j,e_k,e_l)=R_{ijkl}$. The Ricci curvature is defined as $Ric(X,Y)=\sum_k R(X,e_k,Y,e_k)$ and denoted in coordinates as $R_{ij}=Ric(e_i,e_j)$. Repeated indices are understood as summations, unless otherwise specified. $\nab_{e_i}$ will often be abbreviated by $\nab_i$. 

\vskip 1pc 

In this section we present the proof of Theorem \ref{the thm}. The main tool is the maximum principle introduced by Calabi in \cite{Ca}, which we recall below.

\vskip 1pc

\begin{dfn}
Let $X$ be a Riemannian manifold, $x_0 \in X$, and $\ph: X \rightarrow \R$ a continuous function. We say that \textit{$\Delta\ph \leq 0$ at $x_0$ in barrier sense} if for any $\ep>0$, there is a $\mathcal{C}^2$ function $\psi_{x_0,\ep}$ on a neighborhood of $x_0$ such that $\psi_{x_0,\ep}(x_0) = \ph(x_0)$, $\Delta\psi_{x_0,\ep}<\ep$, and $\psi_{x_0,\ep}\geq\ph$. We say that \textit{$\Delta\ph \leq 0$ in barrier sense} if $\Delta\ph \leq 0$ at $x_0$ in barrier sense for all $x_0\in X$.
\end{dfn}

\vskip 1pc

\begin{lem}[Maximum principle for barrier subsolutions]
	If $\Delta\ph \leq 0$ in barrier sense, then either $\ph$ is constant or $\ph$ has no weak local minimum.
\end{lem}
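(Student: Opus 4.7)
The plan is to follow the classical Hopf strong maximum principle argument, using the barrier hypothesis as a substitute for the usual $\mathcal{C}^2$-regularity of $\ph$ at two key places. Arguing by contradiction, suppose $\ph$ is not constant and attains a weak local minimum at some point $p_0$ with value $m = \ph(p_0)$, so that $\ph \geq m$ on a ball $B_R(p_0)$. A standard sliding-ball construction inside $B_R(p_0)$ then produces a smaller metric ball $B_\rh(y) \subset B_R(p_0)$, with $y$ chosen far from $\pr B_R(p_0)$, such that $\ph > m$ on $B_\rh(y)$ and $\ph(z) = m$ at some contact point $z \in \pr B_\rh(y)$. Since $z$ lies in the interior of $B_R(p_0)$, the point $z$ is itself a weak local minimum of $\ph$ in $X$.

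Next I would introduce the Hopf auxiliary function $h(x) = e^{-\af\, d(y,x)^2} - e^{-\af\rh^2}$ on the annulus $A = B_\rh(y)\setminus\overline{B_{\rh/2}(y)}$. A routine calculation using $|\nab d|^2 = 1$ and boundedness of $\Delta d$ on $\overline{B_\rh(y)}$ shows that for $\af$ sufficiently large one has $\Delta h \geq c > 0$ on $A$, together with $h>0$ on $A$, $h=0$ on $\pr B_\rh(y)$, and $\nab h(z)$ pointing strictly inward at $z$. Compactness of $\pr B_{\rh/2}(y)\subset\{\ph>m\}$ and continuity of $\ph$ then permit choosing a small $\dt > 0$ with $\ph - m \geq \dt h$ on the boundary $\pr A$.

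The barrier hypothesis enters first to propagate this inequality to all of $\bar A$: I claim $\ph - m \geq \dt h$ on $\bar A$. Otherwise the continuous function $w = \ph - m - \dt h$ attains a strictly negative minimum at an interior point $p^*\in A$. Applying the barrier hypothesis at $p^*$ with $\ep < \dt c$ yields a $\mathcal{C}^2$ function $\ps$ near $p^*$ with $\ps(p^*) = \ph(p^*)$, $\ps \geq \ph$, and $\Delta\ps(p^*) < \ep$. Then $\ps - m - \dt h$ is $\mathcal{C}^2$, agrees with $w$ at $p^*$, and dominates $w$ in a neighborhood, so $p^*$ is a local minimum of $\ps - m - \dt h$; hence $\Delta\ps(p^*) \geq \dt\,\Delta h(p^*) \geq \dt c$, contradicting $\Delta\ps(p^*) < \ep$.

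The barrier hypothesis enters a second time at $z$ via a Hopf-type boundary derivative argument. Any $\mathcal{C}^2$ upper barrier $\ps$ at $z$ satisfies $\ps(z) = m$ and $\ps \geq \ph$ in a full manifold neighborhood of $z$; since $z$ is a weak local minimum of $\ph$, it is therefore a local minimum of $\ps$, so $\nab\ps(z) = 0$. On the other hand, the inequality $\ps \geq m + \dt h$ on the inward side of $\pr B_\rh(y)$ (from the previous step) with equality at $z$, combined with $\nab h(z)$ having a strictly positive component in the inward radial direction, forces the directional derivative of $\ps$ at $z$ in the inward radial direction to be strictly positive, contradicting $\nab\ps(z) = 0$. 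The main obstacle is the comparison step in the third paragraph, where the $\ep$-slack in the barrier hypothesis must be leveraged in place of the classical strict-inequality maximum principle; after that, everything is a direct translation of E. Hopf's classical argument into the barrier setting.
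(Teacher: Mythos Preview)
The paper does not prove this lemma; it is quoted from Calabi and used as a black box, so there is nothing to compare your argument against directly.

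Your Hopf-type argument has a real gap at the very first step: the sliding-ball construction inside $B_R(p_0)$ presupposes that $\ph>m$ somewhere in $B_R(p_0)$, but ``$\ph$ not constant on $X$'' does not preclude $\ph\equiv m$ on all of $B_R(p_0)$. This is not merely cosmetic---the lemma as literally stated, with ``weak \emph{local} minimum,'' is in fact false. On $\R^n$ the function $\ph(x)=-\max(0,|x|-1)$ satisfies $\Delta\ph\le 0$ in the barrier sense at every point (at a point with $|x_0|=1$ one may take $\ps_\ep=\tfrac{\ep}{3}(|x|-1)^2-\tfrac12(|x|-1)$ as an admissible upper barrier), has every point of the open unit ball as a weak local minimum, yet is not constant; in your argument this example fails exactly where you would expect, since $\ph$ is flat near any such $p_0$ and at the edge of the plateau the candidate contact point is no longer a local minimum. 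What Calabi proves, and what the paper actually applies on the annulus $\{\ep\le r\le R\}$, is the strong minimum principle: if $\ph$ attains its \emph{infimum over a connected domain} at an interior point then it is constant there. For that statement the sliding-ball step is justified (the closed set $\{\ph=\inf\ph\}$, if proper, has a boundary point around which one places the Hopf ball), and the remainder of your argument---the comparison on the annulus exploiting the $\ep$-slack, and the boundary-derivative contradiction at $z$---goes through. One minor technical point: take $\rh$ below the injectivity radius at $y$ so that $d(y,\cdot)^2$ is smooth on the closed ball and the computation of $\Delta h$ is valid.
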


\vskip 1pc

Define a tensor $H$ as the following, motivated by the fact that it vanishes on the Euclidean space.
\begin{equation*}
H=\Hess_G+\frac{n}{2-n}\cdot\frac{\nab G\otimes\nab G}{G}+(n-2)\cdot G^\frac{-n}{2-n}g.
\end{equation*}

By a straightforward computation, it follows that $\Hess_{b^2}=-\frac{2}{n-2}G^{\frac{n}{2-n}}H+2g$. Hence, the assumption that $\Hess_{b^2} \leq Dg$ is equivalent to $0\leq H+\frac{n-2}{2}(D-2)G^{\frac{-n}{2-n}}g$. Let $\af=-\frac{n}{2-n}=\frac{n}{n-2}$, so that our goal is to show that $0\leq H+\frac{n-2}{2}(C-2)G^\af g$. For convenience call this tensor $\widetilde{H}$,
\begin{align*}
	\widetilde{H}:=H+\frac{n-2}{2}(C-2)G^\af g=\Hess_G+\frac{n}{2-n}\cdot\frac{\nab G\otimes\nab G}{G}+\frac{n-2}{2}C\cdot G^\af g.
\end{align*}

We also define the function $\Lambda$ on $M\backslash \{x\}$ to be the lowest eigenvalue of $\widetilde{H}$,

\begin{align*}
	\Lambda(p) := \min_{V\in T_p M, \hspace{0.6mm}g(V,V)=1} \widetilde{H}(V,V).
\end{align*}

Then $\Lambda$ is continuous, and the assumption that $\Hess_{b^2}\leq Dg$ implies that $\frac{n-2}{2}(C-D)G^\af\leq \Lambda$. 

An ingredient we will need is the following lemma. Let $p\in M\backslash\{x\}$ and let $\{e_i\}$ be a normal frame at $p$, i.e. $g_{ij}(p)=\dt_{ij}$ and $\nab_j e_i(p)=0$ for any $i,j$. Denote $\widetilde{H}_{ij}=\widetilde{H}(e_i,e_j)$. For convenience, define the tensor $B$ as $B=\frac{\nab G\otimes \nab G}{G}$, or in coordinates as $B_{ij}=\frac{G_i G_j}{G}$. Then $B$ is positive-semidefinite with eigenvalues $\frac{|\nab G|^2}{G}$ and $0$. We compute the following quantity in a straightforward manner.

\vskip 1pc

\begin{lem}\label{lap of harnack}
The following holds at $p$.
\begin{align*}
\Delta&(\widetilde{H}_{ij}-\frac{n-2}{2}C\cdot G^\af g_{ij})\\
&=R_{ik}\widetilde{H}_{jk}+R_{jk}\widetilde{H}_{ik}-2R_{ikjl}\widetilde{H}_{kl}-\frac{2n}{n-2}R_{ikjl}\frac{G_iG_j}{G}-\frac{2n}{(n-2)G}\widetilde{H}^2_{ij}\\
&\hspace{5mm}-\frac{n(n-2)}{2}C^2G^{2\af-1}g_{ij}+\frac{4n}{n-2}\bigg[C\cdot G^{\af-1}-\frac{2|\nab G|^2}{(n-2)^2 G^2}\bigg]B_{ij}\\
&\hspace{5mm}+\frac{2n}{(n-2)G}\bigg[\widetilde{H}\bigg(\frac{2}{2-n}B+\frac{n-2}{2}C\cdot G^\af g\bigg)+\bigg(\frac{2}{2-n}B+\frac{n-2}{2}C\cdot G^\af g\bigg)\widetilde{H}\bigg]_{ij}.
\end{align*}	
\end{lem}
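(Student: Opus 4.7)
The plan is a direct computation at $p$ in the chosen normal frame, exploiting the two standing hypotheses $\Delta G=0$ on $M\setminus\{x\}$ and $\nab R=0$. The convenient starting point is the identity
\begin{equation*}
\widetilde{H}_{ij}-\tfrac{n-2}{2}C\cdot G^\af g_{ij}=G_{ij}+\tfrac{n}{2-n}B_{ij},
\end{equation*}
which reduces the task to computing $\Delta G_{ij}$ and $\Delta B_{ij}$ at $p$ and combining them with the correct coefficient. The $\widetilde{H}^2$ and mixed $\widetilde{H}\cdot B$, $\widetilde{H}\cdot G^\af g$ pieces on the right-hand side of the claim will be produced at the very end by the inversion
\begin{equation*}
G_{ij}=\widetilde{H}_{ij}-\tfrac{n}{2-n}B_{ij}-\tfrac{n-2}{2}C\cdot G^\af g_{ij},
\end{equation*}
applied to every $G_{ij}$ remaining after the Laplacian has been fully expanded.

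For $\Delta G_{ij}$ I would invoke the Lichnerowicz--Weitzenb\"ock commutator formula for the Laplacian of the Hessian of a function. With parallel Ricci the $\nab R$ terms drop out, and with $\Delta G=0$ the $\Hess_{\Delta G}$ piece vanishes, leaving a linear combination of $R_{ik}G_{kj}$, $R_{jk}G_{ki}$, and $R_{ikjl}G_{kl}$ with signs dictated by the paper's curvature convention. Re-expressing each $G_{\ast\ast}$ via the inversion then yields the first line of the claim, namely $R_{ik}\widetilde{H}_{jk}+R_{jk}\widetilde{H}_{ik}-2R_{ikjl}\widetilde{H}_{kl}$, together with a genuine $R_{ikjl}B_{kl}$ contribution matching the $R_{ikjl}G_\ast G_\ast/G$ term in the claim, and $R\cdot g$ pieces that combine via $R_{ikjl}g_{kl}=\pm R_{ij}$ with the Ricci contributions and cancel.

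For $\Delta B_{ij}=\Delta(G_iG_j/G)$ I would expand using the product and quotient rules, together with $\Delta(1/G)=2|\nab G|^2/G^3$ (which uses only $\Delta G=0$) and the 1-form Weitzenb\"ock commutator expressing $\Delta G_i$ as a Ricci contraction with $\nab G$. The expansion produces one Kato-type quadratic piece $(2/G)G_{ki}G_{kj}$, together with mixed terms of schematic type $G^{-2}G_{ki}G_kG_j$, $|\nab G|^2G^{-3}G_iG_j$, and $G^{-1}R_{ik}G_kG_j$. Substituting the inversion into every remaining $G_{\ast\ast}$, expanding the resulting trinomial squares, and using the pointwise identities $g_{ki}B_{kj}=B_{ij}$ and $B_{ki}B_{kj}=(|\nab G|^2/G)B_{ij}$ produces the $-\tfrac{2n}{(n-2)G}\widetilde{H}^2_{ij}$ term, the two symmetrized cross terms $\widetilde{H}\cdot\bigl(\tfrac{2}{2-n}B+\tfrac{n-2}{2}CG^\af g\bigr)+(\mathrm{transpose})$, and the remaining scalar contributions $-\tfrac{n(n-2)}{2}C^2G^{2\af-1}g$ and the bracketed $CG^{\af-1}B$ and $|\nab G|^2G^{-2}B$ pieces.

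The main obstacle is purely organizational: a moderately large number of monomials in $(\widetilde{H}, B, G^\af g, \mathrm{Ric}, \mathrm{Rm})$ appear during the expansion, and one must track the factors of $n/(n-2)$, $2/(2-n)$, and $(n-2)/2$ introduced at each substitution so that the scalar coefficients match the claim exactly. A useful running consistency check is that on $\R^n$ with $G=r^{2-n}$ and $C=2$ one has $\widetilde{H}\equiv 0$ and vanishing curvature, while the left-hand side reduces to $\Delta\bigl(-(n-2)G^\af g\bigr)=-2n(n-2)\, r^{-n-2}\, g$, so every term on the right-hand side containing $\widetilde{H}$ or curvature vanishes and the remaining scalar pieces must collapse to precisely this quantity; tracking this reduction at each stage of the bookkeeping catches the most likely source of arithmetic error.
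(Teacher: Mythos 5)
Your proposal is correct and follows essentially the same route as the paper: reduce the left-hand side to $\Delta\bigl(G_{ij}+\tfrac{n}{2-n}B_{ij}\bigr)$, compute $\Delta G_{ij}$ by the commutator formula with parallel Ricci (the paper's Lemma \ref{commutators}), compute $\Delta B_{ij}$ by Leibniz and $\Delta(G^{-1})=2|\nab G|^2/G^3$ (the paper's Lemma \ref{misc}), then back-substitute $G_{ij}=\widetilde{H}_{ij}-\tfrac{n}{2-n}B_{ij}-\tfrac{n-2}{2}C\,G^\af g_{ij}$ and simplify using $B^2=\tfrac{|\nab G|^2}{G}B$ (the paper's Lemma \ref{B squared}). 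The only cosmetic difference is that the paper first packages the quadratic pieces as $\tfrac{2n}{(2-n)G}\bigl[\Hess_G-B\bigr]^2_{ij}$ before expanding, which keeps the bookkeeping a bit tighter; your flat-space consistency check at $C=2$ is a sound safeguard and does verify (note that the Lemma statement's $R_{ikjl}\tfrac{G_iG_j}{G}$ should read $R_{ikjl}\tfrac{G_kG_l}{G}$, as you implicitly noticed and as the paper itself uses later when applying the lemma).
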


\vskip 1pc

The proof of this fact is provided in Section 3.

\vskip 1pc

\begin{proof}[Proof of Theorem \ref{the thm}]

First we note that if $\Delta\Lambda \leq 0$	in barrier sense whenever $\Lambda<0$, then the theorem would follow by the maximum principle. Indeed, in the case that $\Lambda$ is constant, note that $\Lambda \geq \frac{n-2}{2}(C-D)\cdot G^\af$ and $G^\af=O(r^{-n})$, therefore $\Lambda\geq 0$. In the case that $\Lambda$ is not constant, $\Lambda$ takes its negative minimum on $\{\ep\leq r\leq R\}$ on the boundary by the maximum principle. By the same argument as in the constant case, we have that $\underset{r=R}{\inf}\Lambda \rightarrow  0$ as $R \rightarrow \infty$, and the assumption near $x$ implies that $\liminf_{\ep \rightarrow 0} \inf_{r=\ep} \Lambda \geq 0$. Therefore it would suffice to establish that $\Delta\Lambda \leq 0$ whenever $\Lambda<0$.

Now suppose that $\Lambda(p)=\widetilde{H}(V,V) < 0$. Write $V=V^ie_i$ on a neighborhood of $p$, where each $V^i$ is extended as a constant function. Define $\widetilde{h}=\widetilde{H}(V,V)=\widetilde{H}_{ij}V^iV^j$. We observe that $\widetilde{h}$ is an upper barrier for $\Lambda$ at $p$. Indeed, $\widetilde{h}(p) = \Lambda(p)$ and $\widetilde{h} \geq \Lambda$ near $p$ by definition of $\Lambda$. It only remains to show that, for any $\ep>0$, if we choose the neighborhood of $p$ small enough then $\Delta\widetilde{h}< \ep$. It is enough to show that if $\widetilde{h}(p)<0$ then $\Delta(\widetilde{H}_{ij}V^iV^j)(p)\leq 0$, since then $\Delta\widetilde{h}<\ep$ follows by continuity. Hence in what follows, all computations are made at $p$. Note that since $V^i$ are constant, we have that $\Delta\widetilde{h}=\Delta(\widetilde{H}_{ij})V^iV^j=(\Delta \widetilde{H}_{ij})V^iV^j$. Thus, it suffices to estimate the terms in Lemma \ref{lap of harnack}.

We bound the first three terms related to the curvature in the following way. Without loss of generality we can assume that $\{e_i\}$ diagonalizes $\widetilde{H}$ at $p$ and write $\widetilde{H}_{ij}=\ld_i\dt_{ij}$. Since $V$ is the lowest eigenvector of $\widetilde{H}$, there is $m$ such that $V=e_m$ with $\lambda_m=\Lambda$. Therefore (with $m$ fixed and $i,j,k,l$ being summed over),
\begin{align*}
&\hspace{2mm}\big(R_{ik}\widetilde{H}_{jk} + R_{jk}\widetilde{H}_{ik}-2R_{ikjl}\widetilde{H}_{kl}\big)V^i V^j \\
&=R_{ik}(\widetilde{H}_{jk} V^j)V^i+R_{jk}(\widetilde{H}_{ik} V^i)V^j-2R_{ikjl}\lambda_k \dt_{kl} V^i V^j \\
&=R_{ik}(\Lambda\cdot V^k)V^i+R_{jk}(\Lambda\cdot V^k)V^j-2R_{ikjk}\lambda_k \dt_{im}\dt_{jm}\\
&=2\Lambda \cdot R_{ij}\dt_{im}\dt_{jm}-2R_{mkmk}\lambda_k\\
&=2R_{mkmk}(\Lambda-\lambda_k) \leq 0,
\end{align*}
since $\Lambda$ is the lowest eigenvalue, and $R_{mkmk}\geq 0$.

The assumption on the sectional curvature implies that $-R_{ikjl}\frac{G_k G_l}{G}V^iV^j \leq 0$. It is also clear that $-\frac{2n}{(n-2)G}(\widetilde{H})^2_{ij}V^iV^j \leq 0$.
 
For the next two of the remaining terms, we will use the sharp gradient estimate in \cite{C} which states that $|\nab b|\leq 1$ for nonnegative Ricci curvature. This is equivalent to $|\nab G|^2\leq (n-2)^2 G^{\af+1}$. Therefore,
\begin{align*}
&-\frac{n(n-2)}{2}C^2G^{2\af-1}g_{ij}V^iV^j+\frac{4n}{n-2}\bigg[C\cdot G^{\af-1}-\frac{2|\nab G|^2}{(n-2)^2 G^2}\bigg]B_{ij}V^iV^j\\
&\leq -\frac{n(n-2)}{2}C^2G^{2\af-1}+\frac{4nC\cdot G^{\af-1}}{n-2}B_{ij}V^iV^j \\
&\leq -\frac{n(n-2)}{2}C^2G^{2\af-1}+\frac{4nC\cdot G^{\af-2}}{n-2}|\nab G|^2\\
&\leq -\frac{n(n-2)}{2}C^2G^{2\af-1}+\frac{4nC\cdot G^{\af-2}}{n-2}\cdot (n-2)^2G^{\af+1}\\
&= -\frac{n(n-2)}{2}C(C-8)G^{2\af-1}.
\end{align*}

For the last group of terms, we use that the top eigenvalue of $B$ is $\frac{|\nab G|^2}{G}$ and the gradient estimate $|\nab G|^2\leq (n-2)^2G^{\af+1}$ to obtain that
\begin{align*}
\bigg[&\widetilde{H}\bigg(\frac{2}{2-n}B+\frac{n-2}{2}C\cdot G^\af g\bigg)+\bigg(\frac{2}{2-n}B+\frac{n-2}{2}C\cdot G^\af g\bigg)\widetilde{H}\bigg]_{ij}V^i V^j\\
&=\frac{2}{2-n}[\widetilde{H}B+B\widetilde{H}]_{ij}V^iV^j+(n-2)C\cdot G^\af\widetilde{H}_{ij}V^iV^j\\
&\leq \frac{4|\nab G|^2}{(n-2)G}|\widetilde{h}|+(n-2)C\cdot G^\af\widetilde{h}\\
&=\bigg[(n-2)C\cdot G^\af-\frac{4|\nab G|^2}{(n-2)G}\bigg]\widetilde{h}\\
&=(n-2)(C-4)\cdot G^\af\widetilde{h}+\frac{4}{(n-2)G}\bigg[(n-2)^2G^{\af+1}-|\nab G|^2\bigg]\widetilde{h} \leq 0.
\end{align*}

Combining all of the above, we conclude that 
\begin{align*}
\big(\Delta(\widetilde{H}_{ij}-\frac{n-2}{2}C\cdot G^\af g_{ij})\big)V^iV^j\leq -\frac{n(n-2)}{2}C(C-8)G^{2\af-1}.
\end{align*}

$G^\af$ can be shown to satisfy the equation $\Delta(G^\af)=\frac{2n}{(n-2)^2}G^{\af-2}|\nab G|^2$. A proof of this fact is given in Section 3, Lemma \ref{misc}. Since $C\geq 10$, it follows that
\begin{align*}
\Delta (\widetilde{H}_{ij}V^iV^j)&=\Delta\bigg((\widetilde{H}_{ij}-\frac{n-2}{2}C\cdot G^\af g_{ij}+\frac{n-2}{2}C\cdot G^\af g_{ij})V^iV^j\bigg)\\
&=\Delta\bigg((\widetilde{H}_{ij}-\frac{n-2}{2}C\cdot G^\af g_{ij})V^iV^j\bigg)+\frac{(n-2)C}{2}\cdot\Delta G^\af\\
&=\Delta\bigg((\widetilde{H}_{ij}-\frac{n-2}{2}C\cdot G^\af g_{ij})V^iV^j\bigg)+\frac{nC}{n-2}\cdot G^{\af-2}|\nab G|^2\\
&\leq -\frac{n(n-2)}{2}C(C-8)G^{2\af-1}+\frac{nC}{n-2}\cdot G^{\af-2}|\nab G|^2\\
&\leq -\frac{n(n-2)}{2}C(C-8)G^{2\af-1}+n(n-2)C\cdot G^{2\af-1}\\
&=-\frac{n(n-2)}{2}C(C-10)G^{2\af-1}\\
&\leq 0,
\end{align*}
where the gradient estimate for $G$ was used for the second inequality. This establishes that $\Delta (\widetilde{H}_{ij}V^i V^j)\leq 0$ and finishes the proof of Theorem \ref{the thm}.
\end{proof}

\vskip 1pc

\begin{remark}
In \cite{H2} it is shown that for a positive solution $f$ of the heat equation on a closed manifold, the matrix quantity $\Hess_f-\frac{\nab f\otimes\nab f}{f}+\frac{f}{2t}g$ is positive-semidefinite for all time. One could ask whether we can introduce a cutoff function to view $G$ as a stationary solution on an annulus in $M$, and obtain the same result for $\Hess_G-\frac{\nab G\otimes\nab G}{G}+\frac{G}{2t}g$, which would imply that $\Hess_{b^2} \leq 4g$. However this setting seems ill-adapted to Hamilton's matrix maximum principle argument, as the assumption that $\pr M=\emptyset$ is essential there.
\end{remark}

\vskip 1pc

\section{Laplacian of the Harnack quantity}

This section is devoted to deriving Lemma \ref{lap of harnack}. We recall the commutators in the case of parallel Ricci curvature. 

\vskip 1pc

\begin{lem}\label{commutators}
Let $\{e_i\}$ be a normal frame at $p$. If $M$ has parallel Ricci curvature, i.e. $\nab_{i} R_{jk}=0$, then for a smooth function $f$ on $M$, the following identities hold at $p$.
\begin{align*}
f_{ij}&=f_{ji}, \\
f_{ijk}-f_{ikj}&=R_{jkli}f_l, \\
\Delta f_i - (\Delta f)_i &= R_{ik}f_k,\\
f_{ijkl}-f_{ijlk}&=R_{klmj}f_{im}+R_{klmi}f_{jm}, \\
\Delta f_{ij} - (\Delta f)_{ij}&=R_{jk}f_{ik}+R_{ik}f_{jk}-2R_{ikjl}f_{kl},
\end{align*}	
where $f_{i_1 i_2 \cdots i_k}$ just means the derivative $e_{i_k} (\cdots e_{i_2} (e_{i_1} (f))\cdots )$.
\end{lem}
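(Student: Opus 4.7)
The plan is to deduce all five commutator identities from three standard ingredients: the vanishing torsion of the Levi-Civita connection, the Ricci identity for commuting two covariant derivatives on a tensor of arbitrary type, and the twice-contracted second Bianchi identity combined with the parallel-Ricci hypothesis. Identities (1), (2), and (4) are direct applications of this machinery: (1) is the symmetry of the Hessian, immediate from torsion-freeness; (2) is the Ricci identity applied to the $1$-form $\omega=df$, read off in the paper's sign convention for $R_{ijkl}$; and (4) is the analogous Ricci identity applied to the $(0,2)$-tensor $\nab df$, with one curvature contraction per tensor slot.

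Identity (3) will then follow from (1) and (2) by a one-line contraction. Using $f_{ki}=f_{ik}$ from (1) and differentiating by $e_k$ gives $f_{kik}=f_{ikk}$; applying (2) with indices $(i,j,k)\mapsto(k,i,k)$ yields $f_{kik}-f_{kki}=R_{iklk}f_l=R_{il}f_l$, and combining these produces $\Delta f_i-(\Delta f)_i=R_{ik}f_k$.

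The main content is identity (5), and the strategy is to compute $f_{ijkk}-f_{kkij}$ by migrating the inner indices $(i,j)$ past the outer pair $(k,k)$ in two stages. In Stage A I would differentiate (3) in the $j$-direction; parallel Ricci kills $\nab_j R_{ik}$, and the scalar symmetry $(\Delta f)_{ij}=(\Delta f)_{ji}$ converts the result into the intermediate identity $f_{ikkj}-(\Delta f)_{ij}=R_{ik}f_{kj}$. In Stage B I would compare $f_{ijkk}$ with $f_{ikkj}$ via two commutations: first, apply $\nab_k$ to (2) to obtain $f_{ijkk}-f_{ikjk}=\nab_k(R_{jkli}f_l)$; second, apply (4) to swap the 3rd and 4th indices of $f_{ikjk}$, which after tracing $R_{jkmk}$ into the Ricci tensor yields $f_{ikjk}-f_{ikkj}=R_{jm}f_{im}+R_{jkmi}f_{km}$. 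Summing Stages A and B should then express $f_{ijkk}-(\Delta f)_{ij}$ as the desired right-hand side $R_{jk}f_{ik}+R_{ik}f_{jk}-2R_{ikjl}f_{kl}$ plus residual terms.

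The hard part will be the final algebraic cleanup. Two residual pieces need to be dealt with: a derivative-of-Riemann term $(\nab^k R_{jkli})f_l$ and a Riemann--Hessian contraction. The first should vanish by the twice-contracted second Bianchi identity $\nab^k R_{jkli}=\nab_j R_{li}-\nab_l R_{ji}$, which is zero under parallel Ricci. The second should be handled using the Hessian symmetry $f_{kl}=f_{lk}$ together with pair-swap and antisymmetry of $R$ to collapse the two pieces into a multiple of $R_{iljk}f_{kl}$; then the first Bianchi identity rewrites $R_{iljk}$ as $R_{ikjl}$ modulo a term proportional to $R_{ijkl}$, and the latter annihilates against the symmetric $f_{kl}$. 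Tracking the signs and factors carefully should produce exactly $-2R_{ikjl}f_{kl}$, completing (5).
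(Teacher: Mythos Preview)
Your proposal is correct and follows essentially the same route as the paper: identities (1)--(4) are standard Ricci/commutation identities (the paper proves (4) by hand via simple tensors, but that is the same content), and your derivation of (5) performs the same two commutations $f_{ijkk}\to f_{ikjk}\to f_{ikkj}$ via (2) and (4), then uses the differentiated form of (3) and kills $\nabla_k R_{jkli}$ by the contracted second Bianchi plus parallel Ricci. The only cosmetic difference is that your final cleanup routes through the first Bianchi identity, whereas the paper collapses $R_{jkli}f_{kl}$ to $-R_{ikjl}f_{kl}$ directly from the pair symmetries of $R$; both are equivalent one-line manipulations.
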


\vskip 1pc

\begin{proof}
The first identity is the symmetry of the Hessian of $f$. For the second one, we compute that 
\begin{align*}
f_{ijk}-f_{ikj}&=e_k(g(\nab_j \nab f,e_i))-e_j(g(\nab_k \nab f,e_i))\\
&=g(\nab_k \nab_j\nab f,e_i)+g(\nab_j \nab f, \nab_k e_i)-g(\nab_j \nab_k\nab f,e_i)-g(\nab_k \nab f, \nab_j e_i)\\
&=g(\nab_k \nab_j\nab f,e_i)-g(\nab_j \nab_k\nab f,e_i) \\
&=R(e_j,e_k,\nab f,e_i)=R_{jkli}f_l.
\end{align*}

A similar identity holds for any 1-form $S$ in place of $df$, namely,
\begin{equation} 
(\nab^2 S)(e_i,e_j,e_k)-(\nab^2 S)(e_j,e_i,e_k)=S(R(e_j,e_i)e_k).\label{one form}	
\end{equation}

This can be checked in the same manner. We will use (\ref{one form}) to prove the fourth identity.

The third identity is a contraction of the one above,
\begin{align*}
f_{ikk}-f_{kki}=f_{kik}-f_{kki}=R_{iklk}f_l=R_{il}f_l=R_{ik}f_k.	
\end{align*}

 The fourth identity is actually true for any (0,2)-tensor $T$ in the following form.
\begin{equation*}
(\nab^2 T)(e_l,e_k,e_i,e_j)-(\nab^2 T)(e_k,e_l,e_i,e_j)=R_{klmj}T(e_i,e_m)+R_{klmi}T(e_m,e_j).	
\end{equation*}

\noindent To show this, let $T=T_1\otimes T_2$ for 1-forms $T_1$ and $T_2$, and compute using (\ref{one form}) and the normality of the coordinates, that
\begin{align*}
(&\nab^2 T)(e_l,e_k,e_i,e_j)-(\nab^2 T)(e_k,e_l,e_i,e_j)	\\
&=\nab^2 (T_1\otimes T_2)(e_l,e_k,e_i,e_j)-\nab^2 (T_1\otimes T_2)(e_k,e_l,e_i,e_j)\\
&=e_l(e_k(T_1(e_i)T_2(e_j)))-e_l(T_1(\nab_k e_i)T_2(e_j)+T_1(e_i)T_2(\nab_k e_j))\\
&\hspace{5mm}-e_k(e_l(T_1(e_i)T_2(e_j)))+e_k(T_1(\nab_l e_i)T_2(e_j)+T_1(e_i)T_2(\nab_l e_j))\\
&=-[e_l(T_1(\nab_k e_i))-e_k(T_1(\nab_l e_i))]T_2(e_j)-T_1(e_i)[e_l(T_2(\nab_k e_j))-e_k(T_2(\nab_l e_j))]\\
&=-[\nab^2 T_1(e_l,e_k,e_i)-\nab^2 T_1(e_k,e_l,e_i)]T_2(e_j)-T_1(e_i)[\nab^2 T_2(e_l,e_k,e_j)-\nab^2 T_2(e_k,e_l,e_j)]\\
&=-T_1(R(e_k,e_l)e_i)T_2(e_j)-T_1(e_i)T_2(R(e_k,e_l)e_j)\\
&=-R_{klim}T(e_m,e_j)-R_{kljm}T(e_i,e_m)\\
&=R_{klmj}T(e_i,e_m)+R_{klmi} T(e_m,e_j).
\end{align*}

\noindent Now the fourth identity follows from taking $T=\Hess_f$, and using the symmetry of the Hessian and the normality of the coordinates.

For the last identity, note that
\begin{align*}
\Delta f_{ij}&=f_{ijkk}=(f_{ikj}+R_{jkli}f_l)_k\\
&=f_{ikjk}+(\nab_k R_{jkli})f_l + R_{jkli}f_{kl}\\
&=f_{ikkj}+R_{jkmi}f_{km}+R_{jkmk}f_{mi}+(\nab_k R_{jkli})f_l + R_{jkli}f_{kl}\\
&=f_{kikj}-R_{ikjm}f_{km}+R_{jm}f_{im}+(\nab_k R_{jkli})f_l-R_{ikjl}f_{kl} \\
&=(f_{kki}+R_{il}f_l)_j-2R_{ikjl}f_{kl}+R_{jk}f_{ik}+(\nab_k R_{jkli})f_l \\
&=(\Delta f)_{ij}+R_{il}f_{jl}+R_{jk}f_{ik}-2R_{ikjl}f_{kl}+(\nab_k R_{jkli})f_l.
\end{align*}

\noindent The second Bianchi identity implies that
\begin{align*}
\nab_k R_{jkli}+\nab_l R_{jkik}+\nab_i R_{jkkl} = \nab_k R_{jkli}+\nab_l R_{ji} -\nab_i R_{jl}=0.	
\end{align*}

\noindent Since $M$ has parallel Ricci curvature, it follows that $\nab_k R_{jkli}=0$. Thus we arrive at
\begin{align*}
\Delta f_{ij}&=(\Delta f)_{ij}+R_{il}f_{jl}+R_{jk}f_{ik}-2R_{ikjl}f_{kl}.
\end{align*}

\noindent Changing $k$ and $l$ suitably, we have shown the lemma.
\end{proof}

\vskip 1pc

With Lemma \ref{commutators} we compute the ingredients for $\Delta\widetilde{H}_{ij}$, additionally using only the Leibniz rule.

\vskip 1pc

\begin{lem}\label{misc} Let $\{e_i\}$ be a normal frame at $p$, and suppose that $M$ has parallel Ricci curvature. Then the following identities hold at $p$.
\begin{align*}
\Delta G_{ij}&=	R_{jk}G_{ik}+R_{ik}G_{jk}-2R_{ikjl}G_{kl}, \\
\Delta(G_i G_j)&=R_{ik}G_jG_k+R_{jk}G_iG_k+2G_{ik}G_{jk},\\
g(\nab G,\nab(G_iG_j))&=G_iG_kG_{jk}+G_jG_kG_{ik}, \\
\Delta\bigg(\frac{G_iG_j}{G}\bigg)&=R_{ik}\frac{G_jG_k}{G}+R_{jk}\frac{G_iG_k}{G}\\
&\hspace{5mm}+\frac{2G_{ik}G_{jk}}{G}+\frac{2|\nab G|^2G_iG_j}{G^3}-\frac{2G_k(G_iG_{jk}+G_jG_{ik})}{G^2},\\
\Delta G^\af &=\frac{2n}{(2-n)^2}G^{\af-2}|\nab G|^2.
\end{align*}
\end{lem}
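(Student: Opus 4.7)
The plan is to exploit the fact that $G$ is harmonic ($\Delta G=0$) together with Lemma \ref{commutators} and routine applications of the product and chain rules for the Laplacian. No new geometric input is needed beyond the parallel Ricci hypothesis, which has already been absorbed into Lemma \ref{commutators}.

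For the first identity I would apply the fifth commutator identity of Lemma \ref{commutators} to $f=G$; the term $(\Delta G)_{ij}$ vanishes, yielding $\Delta G_{ij}=R_{jk}G_{ik}+R_{ik}G_{jk}-2R_{ikjl}G_{kl}$ immediately. For the second and third identities, the Leibniz rule gives $\Delta(G_iG_j)=(\Delta G_i)G_j+G_i(\Delta G_j)+2g(\nab G_i,\nab G_j)$ and $\nab(G_iG_j)=G_j\nab G_i+G_i\nab G_j$. The third identity of Lemma \ref{commutators} applied to $f=G$ yields $\Delta G_i=R_{ik}G_k$, while in the normal frame at $p$ one has $g(\nab G_i,\nab G_j)=G_{ik}G_{jk}$ and $g(\nab G,\nab G_i)=G_kG_{ik}$. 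Assembling these gives the second and third identities.

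For the fourth identity I would write $G_iG_j/G$ as a product $fh$ with $f=G_iG_j$ and $h=G^{-1}$, and use $\Delta(fh)=h\,\Delta f+f\,\Delta h+2g(\nab f,\nab h)$. Since $\Delta G=0$, a direct chain rule computation yields $\Delta G^{-1}=2|\nab G|^2/G^3$ and $\nab G^{-1}=-G^{-2}\nab G$; substituting the expressions from the second and third identities then produces the four-term formula. For the last identity, the chain rule gives $\Delta G^{\af}=\af(\af-1)G^{\af-2}|\nab G|^2+\af G^{\af-1}\Delta G$, and the second term drops out by harmonicity. The arithmetic $\af(\af-1)=\frac{n}{n-2}\cdot\frac{2}{n-2}=\frac{2n}{(n-2)^2}=\frac{2n}{(2-n)^2}$ closes the computation.

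There is no serious conceptual obstacle: all five identities reduce to careful bookkeeping once harmonicity of $G$ and Lemma \ref{commutators} are in hand. The fourth identity is the most tedious since it chains together all the preceding pieces with the expression for $\Delta G^{-1}$, and a sign error in $\nab G^{-1}=-G^{-2}\nab G$ would propagate throughout; this is the main place where care is needed. One might instead derive the fourth identity by writing $G^{-1}$ via $\log G$ (so $\nab(G_iG_j/G)=G^{-1}[\nab(G_iG_j)-G_iG_j\nab \log G]$), but the $fh$ decomposition above appears cleanest.
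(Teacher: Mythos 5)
Your proposal is correct and follows essentially the same route as the paper: harmonicity of $G$, the commutator identities of Lemma \ref{commutators}, the Leibniz rule for $\Delta(G_iG_j)$ and $\nabla(G_iG_j)$, the chain rule for $\Delta G^{\beta}$ (applied with $\beta=-1$ and $\beta=\alpha$), and the product rule $\Delta(fh)=h\Delta f+f\Delta h+2g(\nabla f,\nabla h)$ with $f=G_iG_j$, $h=G^{-1}$ for the fourth identity. The arithmetic $\alpha(\alpha-1)=\frac{2n}{(n-2)^2}$ matches the paper's computation, so no gap or deviation to flag.
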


\vskip 1pc

\begin{proof}
The first identity is immediate from Lemma \ref{commutators} and the fact that $\Delta G=0$, and the third identity is an application of the Leibniz rule on $G_i G_j$. For the second identity,
\begin{align*}
\Delta(G_iG_j)&= \Delta(G_i)G_j+G_j\Delta(G_i)+2G_{ik}G_{jk} \\
&=[(\Delta G)_i+R_{ik}G_k]G_j+[(\Delta G)_j+R_{jk}G_k]G_i+2G_{ik}G_{jk} \\
&=R_{ik}G_jG_k+R_{jk}G_iG_k+2G_{ik}G_{jk}.
\end{align*}

 We also derive that for any $\bt$,
\begin{align*}
\Delta G^{\bt}&=\text{div}(\bt \cdot G^{\bt-1}\nab G)=\bt(\bt-1)G^{\bt-2}|\nab G|^2,
\end{align*}

\noindent from which the last identity is immediate and it follows that $\Delta(G^{-1})=2G^{-3}|\nab G|^2$. We use this and the third identity to check the fourth identity,
\begin{align*}
\Delta\bigg(\frac{G_iG_j}{G}\bigg)&=	\frac{\Delta(G_iG_j)}{G}+\Delta(G^{-1})G_iG_j-\frac{2}{G^2}g\big(\nab G,\nab(G_iG_j)\big)\\
&=\frac{\Delta(G_iG_j)}{G}+\frac{2|\nab G|^2G_iG_j}{G^3}-\frac{2G_k(G_iG_{jk}+G_jG_{ik})}{G^2}.
\end{align*}
\end{proof}

\vskip 1pc

\begin{lem} \label{B squared}
Let $B=\frac{\nab G\otimes \nab G}{G}$, or equivalently in coordinates, $B_{ij}=\frac{G_i G_j}{G}$ for an orthonormal frame $\{e_i\}$. Then 
$B^2=\frac{|\nab G|^2}{G}B$.
\end{lem}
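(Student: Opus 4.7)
The plan is to compute $B^2$ directly in the orthonormal frame $\{e_i\}$ from the coordinate formula $B_{ij}=\frac{G_iG_j}{G}$. Since $B$ has the rank-one structure of an outer product of the vector $\nabla G$ with itself (divided by $G$), squaring it should just pull out the squared norm of $\nabla G$ divided by $G$, leaving $B$ itself.

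Concretely, I would write
\begin{align*}
(B^2)_{ij} = \sum_k B_{ik}B_{kj} = \sum_k \frac{G_iG_k}{G}\cdot\frac{G_kG_j}{G} = \frac{G_iG_j}{G^2}\sum_k G_k^2 = \frac{|\nabla G|^2}{G}\cdot\frac{G_iG_j}{G} = \frac{|\nabla G|^2}{G}B_{ij},
\end{align*}
using that $\sum_k G_k^2 = |\nabla G|^2$ in an orthonormal frame. This identity is frame-independent since both sides are tensors.

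There is no real obstacle here; the content is just the observation that $B$ is rank one with nonzero eigenvalue $\frac{|\nabla G|^2}{G}$ (eigenvector $\nabla G$) and zero on the orthogonal complement, so $B^2$ is $B$ scaled by its trace $\frac{|\nabla G|^2}{G}$. This is the form in which the lemma is used earlier (to bound the top eigenvalue of $B$) in the proof of Theorem \ref{the thm}.
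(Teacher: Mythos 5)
Your proof is correct and is essentially the same computation as the paper's: expand $(B^2)_{ij}=\sum_k B_{ik}B_{kj}$ in the orthonormal frame and factor out $\sum_k G_k^2=|\nabla G|^2$. The extra remarks about the rank-one structure are a nice complementary viewpoint but not a different argument.
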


\begin{proof}
\begin{align*}
	(B^2)_{ij}=\frac{G_iG_k\cdot G_jG_k}{G^2}=\frac{|\nab G|^2}{G}\cdot \frac{G_iG_j}{G}=\frac{|\nab G|^2}{G}B_{ij}.
\end{align*}	
\end{proof}

We are now ready to prove Lemma \ref{lap of harnack}.

\vskip 1pc 

\begin{proof}[Proof of Lemma \ref{lap of harnack}]
By Lemma \ref{misc}, we have that
\begin{align*}
\Delta&(\widetilde{H}_{ij}-\frac{n-2}{2}C\cdot G^\af g_{ij})\\
&=\Delta\bigg(G_{ij}+\frac{n}{2-n}\cdot \frac{G_iG_j}{G}\bigg)\\
&=R_{jk}G_{ik}+R_{ik}G_{jk}-2R_{ikjl}G_{kl}+\frac{n}{2-n}\bigg(\frac{R_{ik}G_jG_k+R_{jk}G_iG_k}{G}\\
&\hspace{38mm}+\frac{2G_{ik}G_{jk}}{G}+\frac{2|\nab G|^2G_iG_j}{G^3}-\frac{2G_k(G_iG_{jk}+G_jG_{ik})}{G^2}\bigg)\\
&=R_{ik}\bigg(G_{jk}+\frac{n}{2-n}\cdot\frac{G_jG_k}{G}\bigg)+R_{jk}\bigg(G_{ik}+\frac{n}{2-n}\cdot\frac{G_iG_k}{G}\bigg)-2R_{ikjl}G_{kl}\\
&\hspace{38mm}+\frac{2n}{(2-n)G}\big[\Hess_G-B\big]^2_{ij}.
\end{align*}

\noindent Substituting the derivatives of $G$ with expressions in $\widetilde{H}$, we obtain that
\begin{align*}
\Delta&(\widetilde{H}_{ij}-\frac{n-2}{2}C\cdot G^\af g_{ij})\\
&=R_{ik}\bigg(\widetilde{H}_{jk}-\frac{n-2}{2}C\cdot G^\af g_{jk}\bigg)+R_{jk}\bigg(\widetilde{H}_{ik}-\frac{n-2}{2}C\cdot G^\af g_{ik}\bigg)\\
&\hspace{5mm}-2R_{ikjl}\bigg(\widetilde{H}_{kl}-\frac{n}{2-n}\frac{G_iG_j}{G}-\frac{n-2}{2}C\cdot G^\af g_{kl}\bigg)\\
&\hspace{5mm}+\frac{2n}{(2-n)G}\bigg[\widetilde{H}-\frac{n}{2-n}B-\frac{n-2}{2}C\cdot G^\af g - B\bigg]^2_{ij}.\\
\end{align*}

\noindent We expand the square term and rearrange as follows.
\begin{align*}
\Delta&(\widetilde{H}_{ij}-\frac{n-2}{2}C\cdot G^\af g_{ij})\\
&=R_{ik}\widetilde{H}_{jk}+R_{jk}\widetilde{H}_{ik}-2R_{ikjl}\widetilde{H}_{kl}-\frac{2n}{n-2}R_{ikjl}\frac{G_iG_j}{G}\\
&\hspace{5mm}-\frac{2n}{(n-2)G}\bigg[\widetilde{H}-\frac{2}{2-n}B-\frac{n-2}{2}C\cdot G^\af g\bigg]^2_{ij}\\
&=R_{ik}\widetilde{H}_{jk}+R_{jk}\widetilde{H}_{ik}-2R_{ikjl}\widetilde{H}_{kl}-\frac{2n}{n-2}R_{ikjl}\frac{G_iG_j}{G}-\frac{2n}{(n-2)G}(\widetilde{H})^2_{ij}\\
&\hspace{5mm}-\frac{8n}{(n-2)^3G}(B^2)_{ij}-\frac{n(n-2)}{2}C^2G^{2\af-1}g_{ij}+\frac{4n}{n-2}C\cdot G^{\af-1} B_{ij}\\
&\hspace{5mm}+\frac{2n}{(n-2)G}\bigg[\widetilde{H}\bigg(\frac{2}{2-n}B+\frac{n-2}{2}C\cdot G^\af g\bigg)+\bigg(\frac{2}{2-n}B+\frac{n-2}{2}C\cdot G^\af g\bigg)\widetilde{H}\bigg]_{ij}.
\end{align*}

\noindent Replacing $B^2$ with $\frac{|\nab G|^2}{G}B$ by Lemma \ref{B squared} and rearraging, it follows that
\begin{align*}
\Delta&(\widetilde{H}_{ij}-\frac{n-2}{2}C\cdot G^\af g_{ij})\\
&=R_{ik}\widetilde{H}_{jk}+R_{jk}\widetilde{H}_{ik}-2R_{ikjl}\widetilde{H}_{kl}-\frac{2n}{n-2}R_{ikjl}\frac{G_iG_j}{G}-\frac{2n}{(n-2)G}(\widetilde{H})^2_{ij}\\
&\hspace{5mm}-\frac{8n}{(n-2)^3}\frac{|\nab G|^2}{G^2}B_{ij}-\frac{n(n-2)}{2}C^2G^{2\af-1}g_{ij}+\frac{4n}{n-2}C\cdot G^{\af-1} B_{ij}\\
&\hspace{5mm}+\frac{2n}{(n-2)G}\bigg[\widetilde{H}\bigg(\frac{2}{2-n}B+\frac{n-2}{2}C\cdot G^\af g\bigg)+\bigg(\frac{2}{2-n}B+\frac{n-2}{2}C\cdot G^\af g\bigg)\widetilde{H}\bigg]_{ij}\\
&=R_{ik}\widetilde{H}_{jk}+R_{jk}\widetilde{H}_{ik}-2R_{ikjl}\widetilde{H}_{kl}-\frac{2n}{n-2}R_{ikjl}\frac{G_iG_j}{G}-\frac{2n}{(n-2)G}(\widetilde{H})^2_{ij}\\
&\hspace{5mm}-\frac{n(n-2)}{2}C^2G^{2\af-1}g_{ij}+\frac{4n}{n-2}\bigg[C\cdot G^{\af-1}-\frac{2|\nab G|^2}{(n-2)^2 G^2}\bigg]B_{ij}\\
&\hspace{5mm}+\frac{2n}{(n-2)G}\bigg[\widetilde{H}\bigg(\frac{2}{2-n}B+\frac{n-2}{2}C\cdot G^\af g\bigg)+\bigg(\frac{2}{2-n}B+\frac{n-2}{2}C\cdot G^\af g\bigg)\widetilde{H}\bigg]_{ij}.
\end{align*}

\noindent This finishes the proof of Lemma \ref{lap of harnack}.
\end{proof}

\vskip 2pc


\end{document}